\newtheorem{prop}{Proposition}[section]
\newtheorem{thm}[prop]{Theorem}
\newtheorem{cor}[prop]{Corollary}
\newtheorem{lem}[prop]{Lemma}
\newcommand{\cD}{{\mathcal{D}}}
\newcommand{\hH}{\mathrm{H^2 }}
\newcommand{\Hol}{\mathrm{Hol}}
\newcommand\TT{\mathbb{T}}
\newcommand\DD{\mathbb{D}}
\DeclareMathOperator{\supp}{supp}
\DeclareMathOperator{\dist}{dist}
\begin{document}
\title[Poincare's inequality and uniqueness set]{Poincare type  inequality  for Dirichlet spaces and application to the uniqueness set}

\author[K.Kellay]{Karim Kellay}
\address{ Universite d'Aix--Marseille I\\
CMI\\LATP\\ 
 39\\ rue F. Joliot-Curie\\ 13453 Marseille France}
\email{ kellay@cmi.univ-mrs.fr}
\keywords{Poincar\'e's inequality, Dirichlet spaces, uniqueness set} 
\thanks{This work was partially supported  by ANR Dynop}

\subjclass[2000]{primary 30H05; secondary 31A25, 31C15.}

\begin{abstract}
 We give an extension of Poincar\'e's  type capacitary inequality for  Dirichlet spaces and provide an application to study the  uniqueness sets on the unit circle for these spaces.
 \end{abstract}

\maketitle

\section{introduction}

Let $\DD$ be the open unit disk in the complex plane  and let $\TT=\partial \DD$ be the unit circle. For $0<\alpha\leq 1$, the Dirichlet space $\cD_\alpha$ consists of all analytic functions $f$ defined on $\DD$ such that
 $$\cD_{\alpha}(f):=\int_{\TT}\int_{\TT}\frac{|f(z)-f(w)|^2}{|z-w|^{1+\alpha}}\frac{|dz|}{2\pi}\frac{|dw|}{2\pi}<\infty.$$
The space $\mathcal{D}_\alpha$ is endowed with the  norm 
$$\|f\|_{\alpha}^{2}:=|f(0)|^2+\cD_{\alpha}(f).$$
By \cite{DH}, this norm is comparable to 
$$\sum_{n\geq 0}|\widehat{f}(n)|^2(1+n)^\alpha.$$
The classical Dirichlet space $\cD_1$ is  a subspace of the  Sobolev space $\mathrm{W}^{1,2}(\DD)$, defined as  the completion of $\mathcal{C}^1(\DD)$ under the norm 
$$\|f\|^2=\Big|\int_\DD f (z)d\mathrm{A}(z)\Big|^2+\int_\DD|\nabla f(z)|^2d\mathrm{A}(z),$$
where $d\mathrm{A}(z)$ is a normalized Lebesgue measure.   Note that the restriction of this norm to $\cD_1$, becomes 
$$\|f\|^2=|f(0)|^2+ \int_\DD|f'(z)|^2d\mathrm{A}(z),\qquad f\in \cD_1,$$
 which is equivalent to the norm of $\cD_1$.

Given $f\in \mathrm{W}^{1,2}(\DD)$,  we write  $\mathrm{Z}(f)=\{z\in\DD \text{Ê: }f(z)=0\}$, the zero set of $f$ in $\DD$. The Poincar\'e capacitary inequality in $ \mathrm{W}^{1,2}(\DD)$ gives  the precise    asymptotic behavior of the constant in Poincar\'e's inequality \cite{M1,Z, KM, AH} (see also the paper \cite{Maz} by Maz'ya and the references there). More precisely there exists a constant $c>0$ such that 
\begin{equation}\label{poincaresobolev}\int_\DD |f(z)|^2d\mathrm{A}(z)\leq \frac{c}{\text{cap}_2( \mathrm{Z}(f))}\int_\DD|\nabla f(z)|^2d\mathrm{A}(z),
\end{equation}
for all $f\in \mathrm{W}^{1,2}(\DD)$, $\|\nabla f\|_2\neq 0$, where 
$$\text{cap}_2(E)=\inf\left\{\int_\DD|\nabla \varphi|^2\text{ : } \varphi\in \mathrm{C}^\infty_0(\DD) \text{, } \varphi\geq 1 \text{ on } E\right\}$$
and $\mathrm{C}^{\infty}_0(\DD)$ is the set of all infinitely differentiable functions of compact support in $\DD$. 
Our main result in this paper is to establish  a Poincar\'e capacitary inequality  for functions in the Dirichlet spaces  with the zero set is  contained  in $\TT$ (see Theorem \ref{capacitepoincare}).  We provide a sufficient condition for a set  to be uniqueness set for  Dirichlet spaces (see Theorem \ref{unicite}). 

Let $\mathrm{X}$ be some class of analytic functions in $\DD$ and let $E$ be a subset of $\TT$. The set $E$ is said to be a uniqueness set for $X$ if, for each $f\in \mathrm{X}$ such that $f^*(\zeta):=\lim\limits_{r\to 1-}f(r\zeta)= 0$ for all $\zeta\in E$, we have $f=0$.

It is clear that $\cD_\alpha$  is contained in the Hardy space $\hH $.  So each function 
$f\in \cD_\alpha$ has non-tangential limits a.e on $\TT$.  It is known that every set $E\subset \TT$ of positive Lebesgue measure is a uniqueness set  for all functions of bounded type in $\DD$ (and therefore, for $\hH $). Carleson \cite{Ca} proved  that  a closed set  of Lebesgue measure zero $E\subset\TT$ is a uniqueness set for the  Lipschitz class if and only if $E$ is not a Carleson set ($\log  \dist(\cdot, E)\not\in \mathrm{L}^{1}(\TT)$). He also proved in the same paper that if $E$ is not a Carleson set under  capacitary condition (in particular $E$ has a positive $C_s$--capacity for some $s>0$), then $E$ is a uniqueness set for the classical Dirichlet space.  Khavin and Maz'ya \cite{KM}  have proved  that there exists a set of uniqueness of $C_s$--capacity zero for any $s>0$ for the classical Dirichlet space. The proof of Khavin and Maz'ya is  based on Poincar\'e's inequality in the Sobolev space \eqref{poincaresobolev}. However,  the Khavin--Maz'ya Theorem   does not allow to deduce the Carleson Theorem. Here, we give a generalization of Khavin--Mazya's result which works for $\cD_\alpha$ spaces, $0<\alpha\leq1$, and from it we deduce Carleson's result.  Our proof is based on a  local Poincar\'e type capacitary inequality in Dirichlet spaces (see Theorem \ref{capacitepoincare}). 
 \section{Poincare's capacitary inequality}
 \subsection{Capacity} We begin with the definition of  the classical capacity \cite{Ca,KS}.  We define the kernel  on $\TT$ by 
$$
k_{\alpha}(\xi)=\left\{
 \begin{array}{lll}
|1-\zeta|^{-\alpha}, & 0<\alpha<1,\\
|\log |1-\zeta||,& \alpha =0.
\end{array}
\right.
$$
Given a probability measure $\mu$ on $\TT$, for $0\leq \alpha < 1$, we define its $\alpha$--energy by 
 $$I_\alpha(\mu)=\iint k_{\alpha}(\zeta\overline{\xi})d\mu(\xi)d\mu(\zeta).$$
 Given a Borel subset $E$ of $\TT$, we denote by $\mathcal{P}(E)$ the set of all probability measures supported on a compact subset of $E$. We define its $C_{\alpha}$--capacity by
 $$C_{\alpha}(E)=1/\inf\{I_\alpha(\mu)\text{ : } \mu \in \mathcal{P}(E)\}.$$ 
 If $\alpha=0$, $C_0$ is called the logarithmic capacity.  Note that for a set $E\subset \TT$, $C_\alpha(E)>0$ means that there exists a Borel positive finite measure   $\mu$ supported by $E$ with finite energy
 $$\sum_{n\geq 1}\frac{|\widehat{\mu}(n)|^2}{n^{1-\alpha}}<\infty.$$
 
 Now we define   the $\mathrm{L}^2$--capacity introduced by Meyers \cite{M} see also \cite{AH, AE}. For $0<\alpha\leq 1$, the harmonic Dirichlet space $\cD_\alpha(\TT)$ consists of all  functions  $f\in \mathrm{L}^2(\TT)$ such that 
 $$\cD_\alpha(f)<\infty$$
 with the norm
 $$\|f\|^{2}_{\cD_\alpha(\TT)}=\|f\|^2_{\mathrm{L}^2(\TT)}+\cD_\alpha(f).$$
This norm is comparable to 
$$\sum_{n\geq 0}|\widehat{f}(n)|^2(1+|n|)^\alpha.$$
 We have 
 $\widehat{k_{1-\frac{\alpha}{2}}}(n)\sim |n|^{-\frac{\alpha}{2}}$ as $n\to \pm\infty$ and so $\|k_{1-\frac{\alpha}{2}}\star f\|_\alpha $ is   comparable to $\|f\|_{\mathrm{L}^2(\TT)}$ for all $f\in \mathrm{L}^2(\TT)$. Hence 
  $$\cD_\alpha(\TT)=\left\{k_{1-\frac{\alpha}{2}}\star f\text{ : }f\in \mathrm{L}^2(\TT)\right\}.$$ 
   For any set $E\subset \TT$ we define the  $C_{\alpha,2}$ capacity  by
 $$C_{\alpha,2}(E):=\inf\left\{\|f\|^{2}_{\mathrm{L}^2(\TT)}\text{ : } f\in \mathrm{L}^2(\TT)\text{ , } f\geq 0\text{ , } k_{1-\frac{\alpha}{2}}\star f\geq 1 \text{ on }E\right\}.$$
 This capacity is  comparable to 
 $$ \inf\left\{\|f\|_{\cD_\alpha(\TT)}^{2}\text{ : } f\in \cD_\alpha(\TT)\text{ , } f\geq 0 \text{ , }f\geq 1 \text{ on }E\right\}.$$
 Furthermore $C_{\alpha,2}(E)$ is comparable  
  to the classical capacity $C_{1-\alpha}$,  where the implied constants depend only on $\alpha$, see  \cite{M} Theorem 14, \cite{AH} Theorem 2.5.5. 
 We finally mention the results of Beurling \cite{B} and Salem Zygmund \cite{KS, Ca,C} about the boundary behavior for the functions of the Dirichlet spaces:   if  $f\in \cD_\alpha$, we write $f^*(\xi)=\lim\limits_{r\to 1-}f(r\xi) $,  then $f^*$ exists $C_{1-\alpha}$--q.e on $\TT$, that is
   $$C_{1-\alpha}(\{\zeta\in \TT\text{ : } f^*(\zeta) \text{ does not  exist}\})=0.$$ 
Note that if $E$ is a closed set such that $C_{1-\alpha}(E)=0$, then there exists a function $f\in \cD_\alpha$ with $f^*(\zeta)=0$ on $E$ (see \cite{C}).

 \subsection{Poincar\'e's capacitary inequality for the Dirichlet spaces} Let $I$, $J$ be two  open 
 arcs of $\TT$ and $f$ be a function. We set

 $$\cD_{I,J,\alpha}(f)=\int_I\int_J\frac{|f(z)-f(w)|^2}{|z-w|^{1+\alpha}}\frac{|dz|}{2\pi}\frac{|dw|}{2\pi}, $$
and 
$$\cD_{I,\alpha}(f)=\cD_{I,I,\alpha}(f).$$
 We begin with a simple extension lemma.
\begin{lem}\label{fextention} Let $0<\gamma<1$ and let  $I=(e^{-i\theta},e^{i\theta})$ with $\theta<\gamma\pi/2$. Let 
$f\in \cD_\alpha$, then there exists a function $\widetilde{f}$ coincide with $f$ in $I$ and such that 
\begin{equation}
\label{ftilde}\cD_{J,\alpha}(\widetilde{f})\leq c\;\cD_{I,\alpha}(f), 
\end{equation}
where $J=(e^{-2i\theta/(1+\gamma)},e^{2i\theta/(1+\gamma)})$ and $c$   an absolute constant.
\end{lem}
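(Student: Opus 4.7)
The plan is to construct $\widetilde f$ by reflecting $f$ across the two endpoints $e^{\pm i\theta}$ of $I$. Writing $\theta'=2\theta/(1+\gamma)$ and decomposing $J=I^-\cup I\cup I^+$ with $I^+=\{e^{i\phi}:\theta<\phi<\theta'\}$ and $I^-=\{e^{i\phi}:-\theta'<\phi<-\theta\}$, set
\[
\widetilde f(e^{i\phi}):=\begin{cases} f(e^{i\phi}), & \phi\in(-\theta,\theta),\\ f(e^{i(2\theta-\phi)}), & \phi\in(\theta,\theta'),\\ f(e^{-i(2\theta+\phi)}), & \phi\in(-\theta',-\theta).\end{cases}
\]
The right reflection $\phi\mapsto 2\theta-\phi$ is a bijection of $(\theta,\theta')$ onto the sub-arc $K=(2\theta\gamma/(1+\gamma),\theta)$; the hypothesis $\gamma<1$ is precisely what forces $K\subset I$. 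Symmetrically the left reflection has image $K'\subset I$. Note also that $J$ has angular radius $2\theta/(1+\gamma)<\gamma\pi/(1+\gamma)<\pi/2$, so any two points of $J$ subtend an angle in $(-\pi,\pi)$ and chord distances on $J$ are comparable to arclength distances up to a universal constant.

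With this extension I split $\cD_{J,\alpha}(\widetilde f)$ into nine pieces indexed by $A\times B$ for $A,B\in\{I,I^+,I^-\}$ and bound each by an absolute multiple of $\cD_{I,\alpha}(f)$. The piece $A=B=I$ gives $\cD_{I,\alpha}(f)$ directly. The pieces $I^\pm\times I^\pm$ reduce under the reflection (an isometry of $\TT$, hence preserving chord distances) to $\cD_{K,\alpha}(f)$ or $\cD_{K',\alpha}(f)$. For each mixed piece $I\times I^+$ and its transpose (symmetrically for $I^-$), the change of variable $w=e^{i\phi}\mapsto w'=e^{i(2\theta-\phi)}$ sends $I^+$ isometrically onto $K$; setting $a=\theta-\arg w'\geq 0$ and $b=\theta-\arg z\geq 0$ one has $|z-w|\sim a+b$ and $|z-w'|\sim |a-b|$, so $|z-w|\geq|z-w'|$ and the piece is bounded by $\cD_{I,K,\alpha}(f)\leq\cD_{I,\alpha}(f)$.

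The main geometric point is the remaining far piece $I^+\times I^-$ (and its transpose). After the double reflection $z\mapsto\widetilde z\in K$, $w\mapsto\widetilde w\in K'$, the claim is $|z-w|\geq|\widetilde z-\widetilde w|$. Parametrising $z=e^{i\phi}$, $w=e^{i\phi'}$ with $\phi\in(\theta,\theta')$ and $\phi'\in(-\theta',-\theta)$, the relevant angular differences are
\[
\phi-\phi'\in(2\theta,2\theta'),\qquad (2\theta-\phi)-(-(2\theta+\phi'))=4\theta-(\phi-\phi')\in\Big(\tfrac{4\theta\gamma}{1+\gamma},2\theta\Big).
\]
Both lie in $(0,\pi)$ (this is where the assumptions $\theta<\gamma\pi/2$ and $\gamma<1$ are used), and one has $\phi-\phi'\geq 2\theta\geq 4\theta-(\phi-\phi')$. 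Since $x\mapsto 2\sin(x/2)$ is strictly increasing on $(0,\pi)$, this yields $|z-w|\geq|\widetilde z-\widetilde w|$, and the piece is therefore at most $\cD_{K,K',\alpha}(f)\leq\cD_{I,\alpha}(f)$. Summing the nine contributions gives $\cD_{J,\alpha}(\widetilde f)\leq c\,\cD_{I,\alpha}(f)$ with an absolute constant; the crux of the argument is precisely this last geometric inequality for the far piece, which works out cleanly thanks to the specific definition of $J$.
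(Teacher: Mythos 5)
Your proof is correct and follows essentially the same strategy as the paper's: extend $f$ to $J$ by reflecting across the endpoints $e^{\pm i\theta}$, split $\cD_{J,\alpha}(\widetilde f)$ into the pieces indexed by $\{I,I^+,I^-\}$, and bound each via a change of variables together with the monotonicity of the chord length $2\sin(x/2)$ on $(0,\pi)$. The only (immaterial) difference is the choice of reflection: the paper uses the contracting map $t\mapsto(3\theta-t)/2$, whereas you use the isometric reflection $t\mapsto 2\theta-t$; both land inside $I$ and yield the same absolute constant up to a bounded factor.
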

 \begin{proof}
 Let  $\widetilde{f}$ be such that  
   $$
   \widetilde{f}(e^{it}) =\left\{\begin{array}{ll}
   f (e^{it})& \quad e^{it} \in I,\\
    f(e^{i\frac{3\theta-t}{2}}) &\quad  e^{it}\in L:=(e^{i\theta},e^{2i\theta/(1+\gamma)}),\\
     f(e^{-i\frac{3\theta+ t}{2} })  &\quad e^{it}\in R:=(e^{-2i\theta/(1+\gamma)},e^{-i\theta}).
     \end{array}
     \right.
     $$   
We write 
\begin{multline*}
\cD_{J,\alpha}(\widetilde{f})=
\cD_{I,\alpha}({f})+\cD_{L,\alpha}(\widetilde{f})+\cD_{R,\alpha}(\widetilde{f})\\
+2\;\cD_{I,L,\alpha}(\widetilde{f})+2\;\cD_{I,R,\alpha}(\widetilde{f})+2\;\cD_{L,R,\alpha}(\widetilde{f}).
\end{multline*}
If $u,v\in (\frac{1+3\gamma}{2(1+\gamma)}\theta,\theta)$, then $\pi>|2u-2v|\geq|u-v|$. By change of variable, we get
$$
\cD_{L,\alpha}(\widetilde{f})={4}\int_{ \frac{1+3\gamma}{2(1+\gamma)}\theta}^{\theta}\int_{\frac{1+3\gamma}{2(1+\gamma)}\theta}^{\theta}
\frac{|f(e^{iu})-f(e^{iv})|^2}{|e^{i(3\theta-2u)}-e^{i(3\theta-2v)}|^{1+\alpha}}\frac{du}{2\pi}\;\frac{dv}{2\pi}
\leq4\cD_{I,\alpha}({f}).
$$
 The same inequality holds for $\cD_{R,\alpha}(\widetilde{f})$.

If $u\in (\frac{1+3\gamma}{2(1+\gamma)}\theta,\theta)$ and $t\in (-\theta,\theta)$, then 
$\pi>3\theta-2u-t\geq |u-t|$ and 
$$\cD_{I,L,\alpha}(\widetilde{f})=2\int_{-\theta}^{\theta}\int_{ \frac{1+3\gamma}{2(1+\gamma)}\theta}^{\theta}\frac{|f(e^{it})-f(e^{iu})|^2}{|e^{it}-e^{i(3\theta-2u)}|^{1+\alpha}}\frac{dv}{2\pi}\frac{dt}{2\pi}\leq 2\;\cD_{I,\alpha}({f}).$$
The same inequality holds also for $\cD_{I,R,\alpha}(\widetilde{f})$.

 If $u\in ( \frac{1+3\gamma}{2(1+\gamma)}\theta,\theta)$ and   $v\in (-\theta,-\frac{1+3\gamma}{2(1+\gamma)}\theta)$, then $\pi>(3\theta-2u)+(3\theta+2v)\geq u-v$ and 
\begin{eqnarray*}
\cD_{L,R,\alpha}(\widetilde{f})&=&{4}\int_{{ \frac{1+3\gamma}{2(1+\gamma)}\theta}}^{\theta}\int_{-\theta}^{-\frac{1+3\gamma}{2(1+\gamma)}\theta }\frac{|f(e^{iu})-f(e^{iv})|^2}{|e^{i (2\theta-2u)}-e^{-i(2\theta+2v) }|^{1+\alpha}}\frac{dv}{2\pi}\frac{du}{2\pi}\\
&\leq&4\;\cD_{I,\alpha}({f}).
\end{eqnarray*}
 Hence \eqref{ftilde} is proved. 
 \end{proof}
 Given $E\subset \TT$, we write $|E|$ for the Lebesgue measure of $E$.  We can now state the main result of this section.
 \begin{thm}\label{capacitepoincare} Suppose that $0<\gamma<1$. Let $E\subset \TT$ and   $f\in \mathcal{D}_\alpha$ be  such that  $f^*|E=0$. Then, for any open arc  $I\subset \TT$ with $|I|\leq \gamma\pi$  and any $0<\beta \leq \alpha$, 
$$\Big[\frac{1}{|I|}\int_I |f(\xi)||d\xi|\Big]^2\leq \frac{c\;|I|^{\alpha-\beta}}{C_{\beta,2}(E\cap I)}\cD_{I,\alpha}(f),$$
where $c$ is a constant depending only on $\beta$ and $\gamma$.
\end{thm}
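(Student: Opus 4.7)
The plan is to construct a test function for the capacity $C_{\beta,2}(E\cap I)$. Set $a:=\frac{1}{|I|}\int_I |f(\xi)||d\xi|$; we may assume $a>0$ (otherwise the claim is trivial). First I apply Lemma~\ref{fextention}, whose proof works verbatim with $\beta$ in place of $\alpha$, to extend $f$ to $\widetilde f$ on $J$ with $\cD_{J,\beta}(\widetilde f)\leq c\,\cD_{I,\beta}(f)$. Since $|\xi-\eta|\leq|I|$ for $\xi,\eta\in I$, we have $\cD_{I,\beta}(f)\leq|I|^{\alpha-\beta}\cD_{I,\alpha}(f)$, and hence $\cD_{J,\beta}(\widetilde f)\leq c|I|^{\alpha-\beta}\cD_{I,\alpha}(f)$. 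Applying the standard Poincar\'e inequality on $I$ to $|f|$ (combined with $\cD_{I,\alpha}(|f|)\leq\cD_{I,\alpha}(f)$) gives $\int_I\bigl||f|-a\bigr|^2|d\xi|\leq c|I|^\alpha\cD_{I,\alpha}(f)$, and Chebyshev then yields the crucial estimate
\[ \bigl|\{\xi\in I:|f(\xi)|\leq a/2\}\bigr|\leq c|I|^\alpha\cD_{I,\alpha}(f)/a^2. \]
This transfers, up to a constant, to $|\{\xi\in J:|\widetilde f(\xi)|\leq a/2\}|$ via the explicit affine reflections defining $\widetilde f$ in the proof of Lemma~\ref{fextention}.

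The test function is
\[ \phi:=\chi\cdot\Bigl(1-\frac{2|\widetilde f|}{a}\Bigr)_+, \]
where $\chi:\TT\to[0,1]$ is a Lipschitz cutoff with $\chi\equiv 1$ on $I$, $\supp\chi\subset J$, and Lipschitz constant $\leq C_\gamma/|I|$. Then $\phi\geq 0$, $\phi\equiv 1$ on $E\cap I$ (since $f^*=0$ there and $\chi\equiv 1$ on $I$), and crucially $\phi\leq\mathbf{1}_{\{|\widetilde f|\leq a/2\}}$, so the $L^2$-bound is immediate:
\[ \|\phi\|_{L^2(\TT)}^2\leq|\{|\widetilde f|\leq a/2\}\cap J|\leq c|I|^{\alpha-\beta}\cD_{I,\alpha}(f)/a^2, \]
where we absorbed $|I|^\alpha\leq c|I|^{\alpha-\beta}$ using $|I|\leq\gamma\pi$.

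The technical heart of the proof is the Dirichlet estimate $\cD_\beta(\phi)\leq c|I|^{\alpha-\beta}\cD_{I,\alpha}(f)/a^2$. Write $\cD_\beta(\phi)=\cD_{J,\beta}(\phi)+2\int_J\int_{\TT\setminus J}|\phi(z)|^2|z-w|^{-(1+\beta)}|dz||dw|/(2\pi)^2$, since $\phi$ vanishes outside $J$. For the interior part, the Leibniz-type pointwise estimate
\[ |\phi(z)-\phi(w)|^2\leq 2|\chi(z)-\chi(w)|^2\mathbf{1}_{\{|\widetilde f(w)|\leq a/2\}}+\frac{8}{a^2}\chi(z)^2|\widetilde f(z)-\widetilde f(w)|^2 \]
splits the estimate into two pieces: the second integrates to $\leq 8a^{-2}\cD_{J,\beta}(\widetilde f)$, already controlled; the first, after splitting the domain of integration at $|z-w|\sim|I|$ and applying Fubini, reduces to a constant multiple of $|I|^{-\beta}|\{|\widetilde f|\leq a/2\}\cap J|$, again controlled by the measure bound. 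The cross term is handled similarly, using $\phi(z)^2\leq\chi(z)^2\mathbf{1}_{\{|\widetilde f|\leq a/2\}}$ and the linear decay $\chi(z)\leq C_\gamma\,d(z,\TT\setminus J)/|I|$, which cancels the singularity of $|z-w|^{-(1+\beta)}$ at $\partial J$. Invoking the comparability of $C_{\beta,2}(E\cap I)$ with the Dirichlet capacity recalled in Section~2.1, one obtains $C_{\beta,2}(E\cap I)\leq c\|\phi\|_{\cD_\beta(\TT)}^2\leq c|I|^{\alpha-\beta}\cD_{I,\alpha}(f)/a^2$, equivalent to the claim.

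The main obstacle is preventing a spurious $|I|^{1-\beta}$ contribution from the cutoff $\chi$, which would render the bound useless in the regime $C_{\beta,2}(E\cap I)\ll|I|^{1-\beta}$. The threshold $a/2$ in the definition of $\phi$ (rather than $a$) ensures that $\phi$ is effectively supported on $\{|\widetilde f|\leq a/2\}$, and the multiplicative indicator forces every problematic term in the Dirichlet estimate to involve the measure of that set, which by Poincar\'e--Chebyshev is at most $c|I|^\alpha\cD_{I,\alpha}(f)/a^2$; this smallness is precisely what absorbs the cutoff cost.
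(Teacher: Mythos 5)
Your proof is correct, and it follows the same overall architecture as the paper's: extend $f$ to $\widetilde f$ on the enlarged arc $J$ via Lemma~\ref{fextention}, build a nonnegative test function equal to $1$ on $E\cap I$ of the form (Lipschitz cutoff)$\,\times\,$(function of $|\widetilde f|$), estimate its $\cD_\beta(\TT)$-norm by splitting into the $\mathrm{L}^2$ part, the $J\times J$ energy and the cross term, and invoke the comparability of $C_{\beta,2}$ with the Dirichlet-type capacity. The genuine difference is the test function itself. The paper takes $F=\phi\,\big|1-|\widetilde f|/m\big|$ with $m$ the mean of $|\widetilde f|$ over $J$ (so $F$ is not bounded by $1$) and controls every cutoff-related term through the pointwise mean-oscillation bound $|m-|\widetilde f(w)||\le |J|^{-1}\int_J|\widetilde f(\zeta)-\widetilde f(w)|\,|d\zeta|$ followed by Cauchy--Schwarz; you instead use the Maz'ya-style truncation $(1-2|\widetilde f|/a)_+$, which is dominated by the indicator of the sublevel set $\{|\widetilde f|\le a/2\}$, and route all cutoff terms through the Poincar\'e--Chebyshev measure bound $|\{|\widetilde f|\le a/2\}\cap J|\le c\,|I|^\alpha\cD_{I,\alpha}(f)/a^2$. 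Both devices rest on the same Poincar\'e-type inequality on $J$, so neither is more general; yours makes more transparent why the cutoff's $|I|^{-\beta}$ cost is absorbed (your closing paragraph), at the price of the extra transfer of the sublevel-set bound from $I$ to $J$ through the reflections. One small point you gloss over and the paper states explicitly: $f^*$ exists only $C_{1-\alpha}$-quasi-everywhere, so your $\phi$ equals $1$ on $E\cap I$ only up to a set of $C_{1-\alpha}$-capacity zero, hence of $C_{1-\beta}$-capacity zero, and one must use that the capacity defined with ``$\ge 1$ q.e.\ on $E\cap I$'' is comparable to $C_{\beta,2}(E\cap I)$.
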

 \begin{proof} For simplicity, we will assume that $I=(e^{-i \theta}, e^{i\theta})$ with $\theta<\gamma\pi/2$. Let $J=(e^{-2i \theta/(1+\gamma)}, e^{2i\theta/(1+\gamma)})$, $\theta_\gamma=\frac{3+\gamma}{2(1+\gamma)}\theta$ the midpoint of $(\theta, 2\theta/(1+\gamma))$ and 
 $I_\gamma= (e^{-i\theta_\gamma},e^{i\theta_\gamma})$. 
 Let $\phi$ be a positive function  on  $\TT$ such that  $\supp \phi=I_\gamma$, $\phi=1$ on $I$ and  
  $$\displaystyle |\phi(z)-\phi(w)|\leq \frac{c_\gamma}{|J|} |z-w|, \qquad z,w\in \TT.$$
where $c_\gamma$ is a constant depending only on $\gamma$.

 Now let $\widetilde{f}$ be the function given in  Lemma \ref{fextention} and set  
$$F(z)= \phi(z)\Big|1-\frac{|\widetilde{f}(z)|}{m}\Big|,\qquad z\in \TT,$$
with  $$m:=\frac{1}{|J|}\int_J|\widetilde{f}(\zeta)||d\zeta|.$$
Hence  $F\geq 0$, $F_{|E\cap I}=1$ $C_{1-\alpha}$--q.p and thus $F_{| E\cap I}=1$ $C_{1-\beta}$--q.p,     since if  $C_{1-\alpha}(A)=0$, we have $C_{1-\beta}(A)=0$.  Therefore, 
\begin{eqnarray}\label{normcap}
C_{{\beta},2}(E\cap I)&\simeq&\inf
\Big\{\|g\|_{\cD_\beta(\TT)}^2\text{ : } g\geq 0 \text{ , } g\geq 1
 \text{ $C_{{\beta},2}$--q.p on } E\cap I \Big\}\nonumber \\
&\leq& c_\beta\|F\|_{\cD_\beta(\TT)}^2,
 \end{eqnarray}
 where $c_\beta$ is a constant depending only on $\beta$.

In order to conclude, we  estimate $\|F\|_{\cD_\beta(\TT)}^2$. First,  
\begin{eqnarray}\label{inegaliteF}
\|F\|_{\cD_\beta(\TT)}^2&=&\int_\TT|F(z)|^2\frac{|dz|}{2\pi}+ \int_{\TT}\int_{\TT} \frac{|F(z)-F(w)|^2}{|z-w|^{1+\beta}}\frac{|dz|}{2\pi}\frac{|dw|}{2\pi}\nonumber\\
& \leq&  \frac{1}{m^2}\int_{J}{|m-|\widetilde{f}(z)||^2}\frac{|dz|}{2\pi}+ \int_J\int_J \frac{|F(z)-F(w)|^2}{|z-w|^{1+\beta}}\frac{|dz|}{2\pi}\frac{|dw|}{2\pi} \nonumber\\
&&+\frac{2}{m^2}\int_{z\in\TT\backslash J}\int_{w\in I_\gamma}\frac{|m-|\widetilde{f}(w)||^2}{|z-w|^{1+\beta}}\frac{|dz|}{2\pi}\frac{|dw|}{2\pi}\nonumber\\
&=&\frac{A}{2\pi m^2}+\frac{B}{4\pi^2}+\frac{C}{2\pi^2m^2}.
\end{eqnarray}
By \eqref{ftilde}, 
\begin{eqnarray}\label{inegalite1}
A &:= &\int_{J}{|m-|\widetilde{f}(z)||^2}|dz|\nonumber\\
&=&\frac{1}{|J|^2}\int_{J}\Big|\int_J (|\widetilde{f}(\zeta)|-|\widetilde{f}(z)|)|d\zeta|\Big|^2|dz| \nonumber\\
&\leq& \frac{1}{|J|}\int_{J}\int_J |\widetilde{f}(\zeta)-\widetilde{f}(z)|^2|d\zeta||dz|\nonumber\\
&\leq& c_1 \int_J\int_J  \frac{|\widetilde{f}(\zeta)-\widetilde{f}(z)|^2}{|\zeta-z|^{1+\beta}}|d\zeta||dz|\nonumber\\
&\leq& c_1 |J|^{\alpha-\beta} \cD_{J,\alpha}(\widetilde{f})\nonumber\\
&\leq&  c_2|I|^{\alpha-\beta} \cD_{I,\alpha}({f}), 
\end{eqnarray}
for some constants $c_1,c_2$ independent of $\beta$ and $\gamma$.

 If $(z,w)\in J\times J$, then 
\begin{eqnarray*}
&&|F(z)-F(w)|=\\
&&\Big|\phi(z)\Big(\big|1-\frac{|\widetilde{f}(z)|}{m}\big|-\big|1-\frac{|\widetilde{f}(w)|}{m}\big|\Big)+
(\phi(z)-\phi(w))\big|1-\frac{|\widetilde{f}(w)|}{m}\big|\Big|\\
&\leq& \frac{1}{m}|\widetilde{f}(z)-\widetilde{f}(w)|+ \frac{c_\gamma}{m}\frac{|z-w|}{ |J|}|m-|\widetilde{f}(w)||\\
&\leq  &\frac{1}{m}|\widetilde{f}(z)-\widetilde{f}(w)|+ \frac{c_\gamma}{m}\frac{|z-w|}{ |J|^2} \int_J|\widetilde{f}(\zeta)-\widetilde{f}(w)||d\zeta|.\\
\end{eqnarray*}
So, by  \eqref{ftilde} again, 
\begin{eqnarray}\label{inegalite2}
B&:=&  \int_{J}\int_{J} \frac{|F(z)-F(w)|^2}{|z-w|^{1+\beta}}|dz||dw|\nonumber \\
&\leq& \frac{2}{m^2} \int_{J}\int_{J} \frac{|\widetilde{f}(z)-\widetilde{f}(w)|^2}{|z-w|^{1+\beta}}|dz||dw|+\nonumber \\
&&\;\;\;\;\;\;\;\;\;\;\;\;\; \frac{2c_\gamma^2}{m^2|J|^4} \int_{J}\int_{J}  \Big(\int_J|\widetilde{f}(\zeta)-\widetilde{f}(w)||d\zeta|\Big)^2
|z-w|^{1-\beta} |dw||dz|\nonumber\\
&\leq&\frac{2+2c_\gamma^2}{m^2} \int_{J}\int_{J}  \frac{|\widetilde{f}(\zeta)-\widetilde{f}(w)|^2}{|\zeta-w|^{1+\beta}}|d\zeta||dw|\nonumber\\
&\leq& \frac{ c_3}{m^2}|I|^{\alpha-\beta} \cD_{I,\alpha}({f}), 
\end{eqnarray}
with $c_3$ is a constant depending only on  $\gamma$.

Finally, 
\begin{eqnarray}\label{inegalite3}
C&:=&\int_{z\in\TT\backslash J}\int_{w\in I_\gamma}\frac{|m-|\widetilde{f}(w)||^2}{|z-w|^{1+\beta}}|dz||dw|\nonumber\\
&\leq &\frac{c_4}{|J|^{1+\beta}}\int_{I_\gamma}{|m-|\widetilde{f}(w)||^2}|dw|\nonumber\\
&\leq & \frac{c_4}{|J|^{2+\beta}}\int_{I_\gamma}\Big|\int_J |\widetilde{f}(\zeta)-\widetilde{f}(w)||d\zeta|\Big|^2|dw|\nonumber\\
&\leq &\frac{c_4 }{|J|^{1+\beta}}\int_{I_\gamma}\int_J |\widetilde{f}(\zeta)-\widetilde{f}(w)|^2|d\zeta||dw|\nonumber\\
&\leq& 
 c_4 \iint_{J\times J} \frac{|\widetilde{f}(\zeta)-\widetilde{f}(w)|^2}{|\zeta-w|^{1+\beta}}|d\zeta||dw|\nonumber\\
&\leq& c_5|I|^{\alpha-\beta} \cD_{I,\alpha}({f}), 
\end{eqnarray}
with  $c_4, c_5$ independent of  $\gamma, \beta$.

By  \eqref{inegalite1}, \eqref{inegalite2} and \eqref{inegalite3},  we see that 
\begin{equation}\label{normfinal}
\|F\|_{\cD_\beta(\TT)}^2\leq \frac{c_6}{m^2}|I|^{\alpha-\beta} \cD_{I,\alpha}({f}),
\end{equation}
with $c_6$ depending only on $\gamma$. Since  
$$m\asymp\frac{1}{|I|}\int_I |f(\xi)||d\xi|,$$
 combining \eqref{normcap} and \eqref{normfinal}, we get 
$$
C_{{\beta},2}(E\cap I)\leq 
c \Big[\frac{1}{|I|}\int_I |f(\xi)||d\xi|\Big]^{-2} |I|^{\alpha-\beta}   \cD_{I,\alpha}({f}), 
$$
where $c$  depending only on $\beta$ and $\gamma$, and the proof is complete.
\end{proof}

\section{set of uniqueness for Dirichlet spaces}
A special case of the theorem  ($\beta=1$ in Theorem \ref{unicite}) was obtained by Khavin and Maz'ya \cite{KM}   for the classical Dirichlet space ($\alpha=1$). Here we give the generalization of their result  in the Dirichlet spaces, including   the classical case. 
\begin{thm}\label{unicite} Let  $E$ be a Borel subset of  $\TT$  of  Lebesgue measure zero. We assume that there exists a family  of pairwise disjoint open arcs  $(I_n)$ of $\TT$ such that $E\subset \bigcup\limits_n I_n$. Suppose that there exists  $0<\beta\leq\alpha$ such that 
$$
\sum_{n}|I_n|\log\frac{|I_n|^{1+\alpha-\beta}}{C_{1-\beta}(E\cap I_n)}=-\infty, 
$$
then $E$ is a uniqueness set for $\cD_\alpha$.
\end{thm}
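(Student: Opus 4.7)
Take $f\in\cD_\alpha$ with $f^*|_E=0$; the goal is $f\equiv 0$. Because $\cD_\alpha\subset\hH $, standard Hardy--space theory reduces this to showing $\int_\TT\log|f^*(\xi)||d\xi|=-\infty$. Since $f^*\in\mathrm{L}^2(\TT)\subset\mathrm{L}^1(\TT)$, the trivial estimate $\log_+|f^*|\leq|f^*|$ yields $\int_V\log|f^*|<\infty$ on $V:=\TT\setminus\bigcup_n I_n$, so it is enough to prove $\sum_n\int_{I_n}\log|f^*|=-\infty$. Moreover, only finitely many arcs can satisfy $|I_n|>\gamma\pi$ (for any fixed $\gamma\in(0,1)$, since $\sum_n|I_n|\leq 2\pi$), and their contribution to both the hypothesis sum and the target sum is finite, so one may assume $|I_n|\leq\gamma\pi$ for every $n$.

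Apply Theorem \ref{capacitepoincare} to each arc together with the comparability $C_{\beta,2}\simeq C_{1-\beta}$ from Section 2. Combine this with Jensen's inequality in the form $\int_{I_n}\log|f^*||d\xi|\leq |I_n|\log\big(|I_n|^{-1}\int_{I_n}|f^*||d\xi|\big)$, take a logarithm of the Poincar\'e bound, and split $|I_n|^{\alpha-\beta}\cD_{I_n,\alpha}(f)=|I_n|^{1+\alpha-\beta}\cdot\cD_{I_n,\alpha}(f)/|I_n|$ inside the log to obtain
\begin{equation}\label{planbound}
\int_{I_n}\log|f^*(\xi)||d\xi|\,\leq\,\frac{|I_n|}{2}\log\frac{c\,|I_n|^{1+\alpha-\beta}}{C_{1-\beta}(E\cap I_n)}\,+\,\frac{|I_n|}{2}\log\frac{\cD_{I_n,\alpha}(f)}{|I_n|}.
\end{equation}

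Summing \eqref{planbound} over $n$, the first term contributes $-\infty$ by hypothesis (modulo an additive constant $\tfrac{\log c}{2}\sum_n|I_n|\leq\pi\log c$). For the second term, set $S:=\sum_n|I_n|\leq 2\pi$ and apply Jensen's inequality to the concave function $\log$ with probability weights $|I_n|/S$:
$$\sum_n\frac{|I_n|}{2}\log\frac{\cD_{I_n,\alpha}(f)}{|I_n|}\,\leq\,\frac{S}{2}\log\!\Big(\frac{1}{S}\sum_n\cD_{I_n,\alpha}(f)\Big)\,\leq\,\frac{S}{2}\log\frac{\cD_\alpha(f)}{S}\,<\,\infty,$$
where the last step uses pairwise disjointness of the arcs to conclude $\sum_n\cD_{I_n,\alpha}(f)\leq\cD_\alpha(f)$. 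Together with the bound on $V$, this yields $\int_\TT\log|f^*|=-\infty$ and hence $f\equiv 0$.

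The main obstacle is the algebraic bookkeeping behind \eqref{planbound}: an extra factor of $|I_n|$ must be transferred from $\cD_{I_n,\alpha}(f)$ to the numerator of the capacity quotient, both (i) to produce the exponent $1+\alpha-\beta$ that matches the hypothesis and (ii) to leave behind the scale-normalised ratio $\cD_{I_n,\alpha}(f)/|I_n|$, which is exactly the form Jensen's inequality with weights $|I_n|/S$ can control by $\cD_\alpha(f)$. Without this pairing, the raw energy sum $\sum_n|I_n|\log\cD_{I_n,\alpha}(f)$ would not manifestly admit a finite upper bound, because individual arc energies may be arbitrarily small.
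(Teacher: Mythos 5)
Your proposal is correct and follows essentially the same route as the paper: apply Theorem \ref{capacitepoincare} on each arc, use the comparability $C_{\beta,2}\simeq C_{1-\beta}$, split off the factor $|I_n|$ to produce the exponent $1+\alpha-\beta$, and control the residual weighted sum $\sum_n\frac{|I_n|}{S}\log(\cD_{I_n,\alpha}(f)/|I_n|)$ by Jensen together with $\sum_n\cD_{I_n,\alpha}(f)\leq\cD_\alpha(f)$, concluding via $\log|f^*|\notin\mathrm{L}^1(\TT)$. If anything, your bookkeeping is slightly more careful than the paper's (the reduction to $|I_n|\leq\gamma\pi$ and the explicit handling of $\TT\setminus\bigcup_n I_n$ are left implicit there).
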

\begin{proof} 
Since $|E|=0$, we can assume that there is $\gamma\in (0,1)$ such that $\sup_n|I_n|\leq \gamma\pi$. Let  $f\in \cD_\alpha$ be such that  $f^*|E=0$. We set $\mathcal{I}=\sum_n |I_n|$. 
Since  $(I_n)$ are disjoint, $C_{1-\beta}$ is  comparable to $C_{{\beta},2}$ . Then Theorem \ref{capacitepoincare} and Jensen inequality  give
\begin{eqnarray*}
2 \int_{\bigcup I_n}\log|f(\xi)|d\xi|& \leq& \sum_n |I_n|\log \Big(\frac{1}{|I_n|}\int_{I_n}|f(\xi)|d\xi|\Big)^2\\
 &\leq &\sum_n |I_n|\log\Big( \frac{c |I_n|^{\alpha-\beta}}{C_{1-\beta}(E\cap I_n)} \cD_{I_n,\alpha}({f})\Big)\\
&= &\sum_n |I_n| \log \frac{|I_n|^{1+\alpha-\beta}}{C_{1-\beta}(E\cap I_n)} +\mathcal{I}\sum_n \frac{|I_n|}{\mathcal{I}} \log (c\cD_{I_n,\alpha}({f}))\\
&\leq &\sum_n |I_n| \log \frac{|I_n|^{1+\alpha-\beta}}{C_{1-\beta}(E\cap I_n)} +\mathcal{I}\log\Big(\frac{c}{ \mathcal{I}}\sum_n\cD_{I_n,\alpha}({f}) \Big)\\
&\leq& \sum_n |I_n| \log \frac{|I_n|^{1+\alpha-\beta}}{C_{1-\beta}(E\cap I_n)}+ {\mathcal{I}}\log \Big(\frac{c}{ \mathcal{I}}\|f\|^{2}_{\alpha} \Big)=-\infty.
\end{eqnarray*}
By Fatou Theorem we obtain $f=0$, which finishes the proof.
\end{proof}

The following result was obtained by Carleson \cite{C} for the classical Dirichlet space. A  generalization of his Theorem was given by Preobrazhenskii in \cite{P} and by  Pau and Pelaez in \cite{PP} for the Dirichlet spaces $\mathcal{D}_\alpha$ with $0<\alpha<1$. Here we give another proof of this generalization. 
\begin{cor}\label{unicite1}. Let  $E$ be a closed subset of  $\TT$  of Lebesgue measure zero.  Let $0<\beta<\alpha\leq 1$.  Assume that there exists  $m>0$ such that  for each interval $I\subset \TT$ centered   at a point of $E$, 
\begin{equation}\label{regularite}
C_{1-\beta}(E\cap I)\geq m |I|. 
\end{equation} 
Then $E$ is a uniqueness set for $\cD_\alpha$ if and only if 
\begin{equation}\label{carlesonset}\sum_n|I_n|\log|I_n|=-\infty,
\end{equation}
 where $(I_n)_n$ are the complementary intervals of  $E$.
\end{cor}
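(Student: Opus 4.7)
The corollary has two directions. For the necessity of \eqref{carlesonset}, I would argue by contrapositive: if $\sum_n|I_n|\log|I_n|>-\infty$, then $\log\dist(\cdot,E)\in \mathrm{L}^1(\TT)$, so $E$ is a Carleson set. The classical construction of Carleson (see \cite{Ca,C}) then yields a nonzero function, holomorphic on $\DD$ and $C^\infty$ on $\overline{\DD}$, whose boundary values vanish exactly on $E$; any such function belongs to $\cD_\alpha$ for every $0<\alpha\leq 1$, so $E$ is not a uniqueness set for $\cD_\alpha$. I would simply cite this step (which does not use \eqref{regularite}) and devote the rest of the proof to the converse.

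For the sufficiency, the plan is to invoke Theorem~\ref{unicite} with the same $\beta$ as in \eqref{regularite}. For any disjoint open cover $(\widetilde I_k)$ of $E$ by arcs centered at points of $E$, the hypothesis \eqref{regularite} combined with the equivalence $C_{1-\beta}\asymp C_{\beta,2}$ recorded in Section~2 gives $C_{1-\beta}(E\cap\widetilde I_k)\gtrsim |\widetilde I_k|$, whence
\[
\sum_k |\widetilde I_k|\log\frac{|\widetilde I_k|^{1+\alpha-\beta}}{C_{1-\beta}(E\cap\widetilde I_k)}\leq (\alpha-\beta)\sum_k|\widetilde I_k|\log|\widetilde I_k|+O(1).
\]
Since $\alpha-\beta>0$, it will be enough to construct such a cover satisfying $\sum_k|\widetilde I_k|\log|\widetilde I_k|=-\infty$.

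I would build the cover from the gaps. For each complementary arc $I_n$ of length $\ell_n$ with left endpoint $a_n\in E$, set $\widehat I_n$ to be the open arc of length $2\ell_n$ centered at $a_n$; this is centered at a point of $E$, and $\sum_n|\widehat I_n|\log|\widehat I_n|=-\infty$ by \eqref{carlesonset}. A Whitney-type argument, forced by the fact that \eqref{regularite} prevents $E$ from being capacitarily thin at any scale, should show that every $x\in E$ lies within distance $\lesssim\ell_n$ of some gap $I_n$, so that $\bigcup_n\widehat I_n\supset E$. A Vitali covering lemma then yields a pairwise disjoint subfamily $(\widehat I_{n_k})$ whose $5$-fold dilates still cover $E$; taking $(\widetilde I_k)$ to be the connected components of $\bigcup_k 5\widehat I_{n_k}$ produces a disjoint open cover of $E$, with each $|\widetilde I_k|$ comparable, by the one-dimensional bounded-overlap property, to the total length of the $\widehat I_{n_k}$'s it contains.

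The crux of the argument, and the main obstacle, is the verification that the Vitali extraction preserves the logarithmic divergence: namely, that $\sum_k|\widetilde I_k|\log|\widetilde I_k|=-\infty$ still holds. This reduces to showing that the extracted subsequence $(n_k)$ carries enough of the mass of $\sum_n\ell_n\log\ell_n=-\infty$. Here \eqref{regularite} enters decisively, as it forces the complementary gaps to be distributed uniformly near $E$, preventing a single large gap from absorbing too many small neighboring ones in the Vitali selection. Granting this step, Theorem~\ref{unicite} applied to $(\widetilde I_k)$ concludes the proof.
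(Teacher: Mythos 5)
Your necessity argument coincides with the paper's (which phrases it via $\mathcal{A}^1(\DD)\subset\cD_\alpha$ and Carleson's characterization of uniqueness sets for smooth classes), and your overall plan for sufficiency --- arcs of gap-length centered at gap endpoints, a Vitali extraction, then Theorem \ref{unicite} --- is also the paper's. But your execution has a genuine gap, and it occurs exactly where you depart from the paper. You insist on producing a disjoint family of arcs that \emph{covers all of} $E$, which forces the dilation-and-connected-components construction, and this creates three problems. First, the covering claim $\bigcup_n\widehat I_n\supset E$ is unjustified: a point of $E$ can lie much farther from the nearest gap endpoint than that gap's length, and \eqref{regularite} does not obviously rule this out. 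Second, the components $\widetilde I_k$ of $\bigcup_k 5\widehat I_{n_k}$ are no longer centered at points of $E$; since capacity is only subadditive, you cannot deduce $C_{1-\beta}(E\cap\widetilde I_k)\gtrsim|\widetilde I_k|$ from \eqref{regularite} applied to the (possibly all small) arcs chained together inside $\widetilde I_k$. Third --- the step you yourself flag --- you never prove $\sum_k|\widetilde I_k|\log|\widetilde I_k|=-\infty$ for the extracted family; ``granting this step'' leaves the proof incomplete.

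The paper sidesteps your first two difficulties with an observation you missed: any superset of a uniqueness set is a uniqueness set, so there is no need to cover $E$ at all. It takes the arcs $J_k$ of length $|I_k|$ centered at the gap endpoints, extracts a disjoint Vitali subfamily $(J_{k'})$, and applies Theorem \ref{unicite} not to $E$ but to the Borel subset $F=E\cap\bigcup_{k'}J_{k'}$, for which $(J_{k'})$ \emph{is} a disjoint cover and for which $F\cap J_{k'}=E\cap J_{k'}$ inherits \eqref{regularite} directly because each $J_{k'}$ is centered at a point of $E$. Your third difficulty, however, is not an artifact of your construction: the survival of the logarithmic divergence under the Vitali extraction is also the delicate point in the paper, which asserts $\sum_{k'}|J_{k'}|\log|J_{k'}|=-\infty$ from $\bigcup_kJ_k\subset3\bigcup_{k'}J_{k'}$ without detail, even though a greedy selection can in principle trade almost all of the logarithmic mass for a few large arcs. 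If you want a complete argument, perform the extraction scale by scale: among the arcs $J_k$ with $|I_k|\sim2^{-n}$ the overlap is bounded, so a disjoint subfamily retains a fixed proportion of $\sum_{|I_k|\sim 2^{-n}}|I_k|$, and hence of $\sum_{|I_k|\sim2^{-n}}|I_k|\,\bigl|\log|I_k|\bigr|$; combining this with the ``pass to the subset $F$'' device closes the proof.
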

\begin{proof}
Note that $\mathcal{A}^{1}(\DD):=\Hol(\DD)\cap \mathcal{C}^1(\overline{\DD})\subset \cD_\alpha$. If $E$ is a uniqueness set for $\cD_\alpha$, then $E$ is a uniqueness set for $\mathcal{A}^1(\DD)$ and thus $E$ is not a  Carleson set \cite{C}, i.e. $E$ has Lebesgue measure zero and satisfies \eqref{carlesonset}.

Conversely, we write $\TT\backslash E=\bigcup\limits_k I_k$ with $I_k= (e^{i\theta_{2k}},e^{i\theta_{2k+1}})$. Let  $J_{2k}$ (resp. $J_{2k+1}$) be  the open arc of length $|I_k|$ with midpoint $e^{i\theta_{2k}}$  (resp. $e^{i\theta_{2k+1}}$). By Vitali covering lemma,  there exists a sub-collection $(J_{k'})_{k'}$ of $(J_k)_k$ which is  disjoint   and satisfies $\bigcup\limits_k J_k \subset 3\bigcup\limits_{k'} J_{k'}$. Hence,   
$$\sum_{k'} |J_{k'}|\log |J_{k'}|=-\infty.$$ 
Let  $F=E\bigcap (\bigcup\limits_{k'}J_{k'})$ be the subset of  $E$ contained in $\bigcup\limits_{k'} J_{k'}$. The set  $F$ is a Borel set and, since $F\cap J_{k'}= E\cap J_{k'}$, by \eqref{regularite}, 
$$C_{1-\beta}(F\cap J_{k'})\geq  m|J_{k'}|.$$
Then for $0<\beta<\alpha\leq 1$, we obtain 
$$\sum _{k'} |J_{k'}|\log\frac{ |J_{k'}|^{1+\alpha-\beta}}{C_{1-\beta} (F\cap J_{k'})}\leq (\alpha-\beta)\sum_{k'} |J_{k'}|\log |J_{k'}|-\log m \sum_{k'}|J_{k'}|=-\infty.$$
By Theorem \ref{unicite}, the set $F$ is a set of uniqueness for $\cD_\alpha$ and so does $E$, which finishes the proof. 
\end{proof}

\subsection*{Remarks}1. A function $\varphi\in \cD_\alpha$ is called multiplier of $\cD_\alpha$ if 
$\varphi \cD_\alpha\subset  \cD_\alpha$ and we denote the set of multipliers by 
$\mathcal{M}_{\cD_\alpha}$.  Richter and Sundberg in \cite{RS} proved that a set $E$ is a  zero set of Dirichlet space $\cD_1$ if and only if  it is a zero set of $\mathcal{M}_{\cD_1}$. On the other hand if $\varphi\in\mathcal{M}_{\cD_1}$,  then by Steganga result \cite{S} Theorem 2.7.c,  we have $\cD_{I,1}(\varphi)=O(C_0 (I))$, note that $C_0(I)\asymp|\log I|^{-1}$.

2. Khavin and Maz'ya in \cite{KM} have constructed a set of uniqueness $E$ for the classical Dirichlet space  such that  $C_{1-\beta}(E)=0$ for every  $0<\beta<1$.  On the other hand, Carleson in \cite{C1} has constructed  a zero set $E$ which satisfies  \eqref{carlesonset} and $E\cap I$ has a positive logarithmic capacity for all arcs such that $E\cap I\neq \emptyset$. As in \cite{KM}, we can construct a closed set $E$ which is a set of uniqueness for $\cD_\alpha$ and such that $C_{1-\beta}(E)= 0$ for all $0<\beta<\alpha< 1$.  Let $(l_n)_{n\ge0}$ be a sequence in $(0,2\pi)$ and let $\mathcal{C}$ be the associated generalized Cantor set. Then for $0\leq s<1$,                         
$$C_{s}(\mathcal{C})=0 \iff \sum_n 2^{-n} l_{n}^{-s}=+\infty,$$
see for example \cite{AE,Ca}. 

Choose $l_n=(2^{-n}n)^\frac{1}{1-\beta}$. Then $C_{1-\beta}(\mathcal{C})=0$ and for $0<\beta<\alpha$, 
$$\sum_n 2^{-n} l_{n}^{-(1-\alpha)}= \sum_n {2^{-n\frac{\alpha-\beta}{1-\beta}}}{ n^{-\frac{1-\alpha}{1-\beta}}}<\infty.$$
Therefore, $C_{1-\alpha}(\mathcal{C})>0$. Now, consider a family of pairwise disjoint open arcs $(I_n)_n$ of $\TT$ be such that 
$$\sum_n |I_n|\log|I_n|=-\infty.$$
A possible example, $I_n=(e^{i(\log (n+1))^{-1}},e^{i(\log n)^{-1}})$, $n\geq 2$.  We reproduce the generalized Cantor set $\mathcal{C}$ in each $I_n$, which will be  denoted by $\mathcal{C}_n$. Therefore,   
$$C_{1-\alpha}(\mathcal{C}_n\cap I_n)\simeq C_{1-\alpha}(\mathcal{C})  |I_n|^\alpha.$$
We set $E=\{1\}\cup \bigcup\limits_n \mathcal{C}_n$.  It is clear that  $C_{1-\beta}(E)=0$, for all  $0<\beta<\alpha$. 
Now  Theorem \ref{unicite} with  $\beta=\alpha$ gives 
$$\sum_{n}|I_n|\log \frac{|I_n|}{C_{1-\alpha}(E\cap I_n)}\simeq-\log C_{1-\alpha}(\mathcal{C})
\sum_{n}|I_n| + (1-\alpha)\sum_n|I_n|\log|I_n|=-\infty.$$
So $E$ is a set of uniqueness for $\cD_\alpha$ with $\alpha<1$.
 
 3.  Malliavin in \cite{Ma} gives  a complete characterization of the sets of uniqueness for the Dirichlet spaces involving a new notion of capacity, but it appears difficult to apply his result to particular situations (see also \cite{KK}). 
  \subsection*{\bf Acknowledgment} I would like to thank the referee for his helpful remarks specially for those regarding the proof of Theorem \ref{capacitepoincare}
  


\begin{thebibliography}{99}

  \bibitem{AH}
D. Adams; L. Hedberg, Function spaces and potential theory, Springer, Berlin, 1996. 
  
  \bibitem{AE}
 H. Aikawa, M. Essen, Potential theory: selected topics, Lecture Note Math. 1633. Springer, Berlin, 1996.

\bibitem{B}
A. Beurling, Ensembles exceptionnels, Acta. Math. 72 (1939), 1--13. 
\bibitem{C}
L. Carleson,  Sets of uniqueness for functions regular in the unit circle, Acta Math., 87 (1952) 325--345. 

\bibitem{Ca}
L. Carleson, Selected Problems on Exceptional Sets, Van Nostrand, Princeton NJ, 1967.

\bibitem{C1} 
L. Carleson, An example concerning analytic functions with finite Dirichlet integrals. Investigations on linear operators and the theory of functions, IX.  Zap. Nauchn. Sem. Leningrad. Otdel. Mat. Inst. Steklov. (LOMI)  92  (1979), 283--287, 326.
\bibitem{DH}
A. Devinatz, I. Hirshman, Multiplier transformations on $l^{2,\alpha}$, Ann. Math. 69 (1959), 575--587.

\bibitem{KS}J. P. Kahane, R. Salem. Ensembles parfaits et s\'eries trigonom\'etriques.  Hermann,  Paris, 1963.

\bibitem{KM}V.Khavin,  V. Maz'ya,, Application of the $(p,l)$--capacity to certain problems 
of theory of exceptional sets, Mat.USSR Sb., 19  (1973), 547--580 (1974). 

\bibitem{KK}V.Khavin, S. Khrushchev, Sets of uniqueness for analytic f functions with the finite Dirichlet integral, Problem 9.3,  531--535. Linear and complex analysis problem book. Lecture Note Math. 1043.   Springer, Berlin, 1984.
\bibitem{Ma} 
P. Malliavin, Sur l'analyse harmonique de certaines classes de s\'eries de Taylor, Symposia Mathematica, Vol. XXII, Academic Press, London, 1977,  71--91.
\bibitem{Maz}
V. Maz'ya, 
Conductor and capacitary inequalities for functions on topological spaces and their applications to Sobolev type imbeddings, J. Funct. Anal. 224 (2005), no.2, 408-430


\bibitem{M}
N.Meyers, A theory of capacities for potentials of functions in Lebesgue classes, Math. scand. 26 (1970), 255--292.
\bibitem{M1}
N. Meyers, 
Integral inequalities of PoincarŽ and Wirtinger type. 
Arch. Rational Mech. Anal. 68 (1978) 113--120. 
\bibitem{PP}
J. Pau, J.A. Pelaez, 
On the zeros of functions in Dirichlet spaces. Trans Amer. Math Soc., to appear. 
\bibitem{P} S.P. Preobrazhenskii, A boundary uniqueness Theorem for regular functions with a finite Dirichlet--type integral, ap. Nauchn. Sem. Leningrad. Otdel. Mat. Inst. Steklov. (LOMI)126  (1978) 180--190.
\bibitem{RS}
S. Richter, C. Sundberg, Multipliers and invariant subspaces in the Dirichlet space, J. Operator Theory 28 (1992) 167--186. 
\bibitem{S}
D. Stegenga, Multipliers of the Dirichlet space, Illinois. J. Math. 24 (1980), 113--139.
\bibitem{Z}
W. Ziemer, Weakly differentiable Functions, Sobolev space and functions of bounded variation. Springer, New--York, 1989.

\end{thebibliography}
\end{document}